\newtheorem{theorem}{Theorem}[section]
\newtheorem{lemma}[theorem]{Lemma}
\newtheorem{proposition}[theorem]{Proposition}
\newtheorem{corollary}[theorem]{Corollary}
\newtheorem{conjecture}[theorem]{Conjecture}
\newcommand*{\pklambda}{\ensuremath{p_{k+1} - p_k < \lambda \sqrt{p_k}} }
\newcommand*{\pnsqrtd}{\ensuremath{\sqrt{p_{n+1}} - \sqrt{p_n} < \frac{\lambda}{(\sqrt{1.2} + 1)}} }
\newcommand*{\pnlambda}{\ensuremath{p_{n+1} - p_n < \lambda \sqrt{p_n}} }
\newcommand*{\ngn}{\ensuremath{n + g(n)\sqrt{n}} }
\newcommand*{\gpk}{\ensuremath{p_{k+1} - p_k < g(p_k)\sqrt{p_k}} }
\newcommand*{\gpkk}{\ensuremath{p_k + g(p_k)\sqrt{p_k}} }
\newcommand*{\nogno}{\ensuremath{n_0 + g(n_0)\sqrt{n_0}} }
\newcommand*{\diffpn}{\ensuremath{p_{n+1} - p_n}}
\newcommand*{\diffsqrtpn}{\ensuremath{\sqrt{p_{n+1}} - \sqrt{p_n}}}
\title{On Proving of Diophantine Inequalities with Prime Numbers by Evaluations of the Difference between Consecutive Primes}
\author{Felix Sidokhine}
\begin{document}
\maketitle

\begin{abstract}
\noindent Using as the working hypothesis of an evaluation of the difference between primes $p_{n+1} - p_n = O(\sqrt{p_n})$ we represent in detail the proofs of Legendre's and Oppermann's conjectures.
\end{abstract}

\section{Introduction}

Applying the best available evaluation of the difference between primes, $p_{n+1} - p_n = O(p_n^{0.525})$ \cite{Baker:2001aa} we have obtained proofs for some Diopantine inequalities with primes including Ingham's results \cite{Sidokhine:2014aa}. Some authors believe that in the presence of a stronger evaluations of the difference between consecutive primes it may be possible to prove Legendre's conjecture and some other statements \cite{Ribenboim:2004aa}. The generally expected evaluation of the difference between consecutive primes is $p_{n+1} - p_n = O(\sqrt{p_n})$ \cite{Heath-Brown:2005aa}, \cite{Pintz:2009aa}. In this paper, using $p_{n+1} - p_n = O(\sqrt{p_n})$ we are able to prove Legendre's and Oppermann's conjectures.

\section{$p_{n+1} - p_n = O(\sqrt{p_n})$ and Diophantine inequalities with primes} 

\begin{proposition}\label{prop_a_1}
The interval $(n,n + \lambda \sqrt{n})$ contains a prime for every integer $n \geq c(\lambda)$ where $\lambda,c(\lambda)$ are some constants, if and only if \pklambda is true for all primes $p_k \geq c(\lambda)$.
\end{proposition}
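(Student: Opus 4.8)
The plan is to prove the two implications of the biconditional separately, each by an elementary translation between prime gaps and primes in intervals; the only tool beyond the definitions is the monotonicity of the map $t \mapsto t + \lambda\sqrt{t}$ on $(0,\infty)$.

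\emph{From intervals to gaps.} Assume the interval $(n, n+\lambda\sqrt{n})$ contains a prime for every integer $n \geq c(\lambda)$. Fix a prime $p_k \geq c(\lambda)$ and apply the hypothesis with $n = p_k$: there is a prime $q$ with $p_k < q < p_k + \lambda\sqrt{p_k}$. Since $q$ is a prime strictly larger than $p_k$, it satisfies $p_{k+1} \leq q$, so $p_{k+1} - p_k \leq q - p_k < \lambda\sqrt{p_k}$, which is exactly \pklambda for this $p_k$. As $p_k$ was an arbitrary prime $\geq c(\lambda)$, the gap bound holds for all such primes.

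\emph{From gaps to intervals.} Assume \pklambda holds for every prime $p_k \geq c(\lambda)$, and let $p_0$ be the smallest prime with $p_0 \geq c(\lambda)$. Given an integer $n \geq p_0$, let $p_k$ be the largest prime with $p_k \leq n$; then $p_k \geq p_0 \geq c(\lambda)$, so the gap bound gives $p_{k+1} < p_k + \lambda\sqrt{p_k}$. Since $p_k \leq n$ and $t \mapsto t + \lambda\sqrt{t}$ is increasing, $p_{k+1} < n + \lambda\sqrt{n}$; and $p_{k+1} > n$ by maximality of $p_k$ (this also covers the case $n = p_k$, since then $p_{k+1} > p_k = n$). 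Hence $p_{k+1}$ lies in $(n, n+\lambda\sqrt{n})$, so every such interval with $n \geq p_0$ contains a prime.

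Putting the two steps together, the interval property with threshold $c(\lambda)$ implies the gap property with the same threshold, and the gap property with threshold $c(\lambda)$ implies the interval property with threshold $p_0 \geq c(\lambda)$; reading the proposition as the assertion that a suitable constant exists, this yields the claimed equivalence. The one point requiring a little care --- and the closest thing to an obstacle --- is this passage between thresholds: the hypothesis ``\pklambda for all $p_k \geq c(\lambda)$'' says nothing about a gap straddling $c(\lambda)$, which is why the interval conclusion is naturally stated from the first prime $p_0$ above $c(\lambda)$ onward rather than from $c(\lambda)$ itself.
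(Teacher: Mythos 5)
Your proposal is correct, and the forward direction coincides with the paper's. For the substantive direction (gap bound implies primes in intervals) you argue directly: given $n$, take the largest prime $p_k \leq n$ and show the next prime $p_{k+1}$ lands in $(n, n+\lambda\sqrt{n})$ by monotonicity of $t \mapsto t+\lambda\sqrt{t}$. The paper instead argues by contradiction: it supposes some $n_0$ gives a prime-free interval, takes the consecutive primes $p_{n-1} < n_0 < p_n$, and shows $(p_{n-1}, p_{n-1}+\lambda\sqrt{p_{n-1}})$ is contained in the prime-free set $(p_{n-1}, n_0) \cup \{n_0\} \cup (n_0, n_0+\lambda\sqrt{n_0})$, contradicting the gap bound at $p_{n-1}$. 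These are essentially contrapositive forms of the same idea, but your version has two small advantages: it handles the case where $n$ itself is prime uniformly (the paper's choice $p_{n-1} < n_0 < p_n$ silently assumes $n_0$ is composite), and it makes explicit the threshold bookkeeping that the paper glosses over --- the gap hypothesis at primes $\geq c(\lambda)$ says nothing about integers $n$ lying between $c(\lambda)$ and the first prime $p_0 \geq c(\lambda)$, since the relevant prime $p_k \leq n$ (the paper's $p_{n-1}$) may fall below $c(\lambda)$. The paper's proof quietly has the same limitation, so your reading of the proposition as asserting the existence of a suitable constant, with the interval conclusion starting at $p_0$ rather than at $c(\lambda)$ itself, is the honest way to state what is actually proved; the paper's contradiction style does, however, match the template it reuses in Proposition 2.4, Lemma 3.2 and Proposition 4.2.
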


\begin{proof}
Let $(n, n + \lambda \sqrt{n})$ contain primes for every integer $n \geq c(\lambda)$. Then for $n = p_k$ the interval $(p_k,p_k + \lambda \sqrt{p_k})$ contains a prime $q$. Hence we have $p_k < q < p_k + \lambda \sqrt{p_k}$. Since $p_k < p_{k+1} \leq q$, \pklambda is true. 
\newline
\newline
Let \pklambda be true for every prime $p_k \geq c(\lambda)$. Let $n_0$ be such that $(n_0, n_0 + \lambda \sqrt{n_0})$ contains no primes. Let $p_{n-1},p_n$ be such that $p_{n-1} < n_0 < p_n$. Then $(p_{n-1},p_{n-1} + \lambda \sqrt{p_{n-1}})$ contains no primes. Since $n_0$ is not prime, the interval $(p_{n-1},n_0 + \lambda \sqrt{n_0}) = (p_{n-1},n_0) \cup [n_0] \cup (n_0,n_0 + \lambda \sqrt{n_0})$ contains no primes. Furthermore, $(p_{n-1},p_{n-1} + \lambda \sqrt{p_{n-1}}) \subset (p_{n-1},n_0 + \lambda \sqrt{n_0})$ since $p_{n-1} + \lambda \sqrt{p_{n-1}} < n_0 + \lambda \sqrt{n_0}$, so $(p_{n-1},p_{n-1} + \lambda \sqrt{p_{n-1}})$ contains no primes, contradicting \pklambda.
\end{proof}

\begin{corollary}\label{cor_a_1}
Let \pnsqrtd, where $\lambda,c(\lambda) > 25$ are constants, be true for all primes $p_n \geq c(\lambda)$. Then \pnlambda is true for every $p_n \geq c(\lambda)$. 
\end{corollary}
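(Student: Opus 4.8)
The plan is to factor the gap as $p_{n+1}-p_n = (\sqrt{p_{n+1}}-\sqrt{p_n})(\sqrt{p_{n+1}}+\sqrt{p_n})$ and then control each factor separately. The hypothesis \pnsqrtd already bounds the first factor by $\lambda/(\sqrt{1.2}+1)$, so everything reduces to showing that the second factor satisfies $\sqrt{p_{n+1}}+\sqrt{p_n}\le(\sqrt{1.2}+1)\sqrt{p_n}$, i.e. $\sqrt{p_{n+1}}\le\sqrt{1.2}\,\sqrt{p_n}$, i.e. $p_{n+1}\le 1.2\,p_n$, for every prime $p_n\ge c(\lambda)$.

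First I would record the elementary identity $p_{n+1}-p_n=(\sqrt{p_{n+1}}-\sqrt{p_n})(\sqrt{p_{n+1}}+\sqrt{p_n})$, valid since $p_n,p_{n+1}>0$. Combining it with the hypothesis gives immediately $p_{n+1}-p_n<\dfrac{\lambda}{\sqrt{1.2}+1}\,(\sqrt{p_{n+1}}+\sqrt{p_n})$ for all primes $p_n\ge c(\lambda)$.

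Next I would invoke Nagura's sharpening of Bertrand's postulate: for every integer $m\ge 25$ the interval $(m,\tfrac{6}{5}m)$ contains a prime. Since $c(\lambda)>25$, any prime $p_n\ge c(\lambda)$ has $p_n\ge 25$, so there is a prime strictly between $p_n$ and $1.2\,p_n$; because $p_{n+1}$ is the least prime exceeding $p_n$, this forces $p_{n+1}<1.2\,p_n$. Hence $\sqrt{p_{n+1}}<\sqrt{1.2}\,\sqrt{p_n}$ and therefore $\sqrt{p_{n+1}}+\sqrt{p_n}<(\sqrt{1.2}+1)\sqrt{p_n}$.

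Substituting this into the inequality from the second step yields $p_{n+1}-p_n<\dfrac{\lambda}{\sqrt{1.2}+1}\cdot(\sqrt{1.2}+1)\sqrt{p_n}=\lambda\sqrt{p_n}$, which is exactly \pnlambda. The only non-elementary ingredient is Nagura's theorem, and the numbers $25$ and $1.2$ appearing in the statement are precisely what makes that step applicable; so the "hard part" is simply recognizing that one should reach for the $\tfrac{6}{5}$-interval bound rather than the cruder $p_{n+1}<2p_n$ from Bertrand's postulate, after which the argument is just the difference-of-square-roots factorization.
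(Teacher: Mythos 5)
Your proof is correct and follows the same route as the paper: the paper's one-line argument cites Nagura's bound $p_{n+1} < \tfrac{6}{5}p_n$ for $p_n > 25$ and leaves the factorization $p_{n+1}-p_n = (\sqrt{p_{n+1}}-\sqrt{p_n})(\sqrt{p_{n+1}}+\sqrt{p_n})$ implicit, which is exactly the computation you spell out. Your version is simply a more complete write-up of the intended argument.
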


\begin{proof}
Since according to \cite{Nagura:1952aa} for any pair of neighbouring primes, $p_{n+1} < \frac{6}{5}p_n$, where $p_n > 25$ is true; \pnlambda  is also true for every prime $p_n \geq c(\lambda)$.
\end{proof}

\begin{corollary}\label{cor_a_2}
Let \pnsqrtd, where $\lambda,c(\lambda) > 25$ are constants, be true for all primes $p_n \geq c(\lambda)$. Then the interval $(n,n+\lambda\sqrt{n})$ contains a prime for every integer $n \geq c(\lambda)$. 
\end{corollary}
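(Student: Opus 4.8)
The plan is simply to compose the two preceding results. By Corollary~\ref{cor_a_1}, the hypothesis that \pnsqrtd holds for all primes $p_n \geq c(\lambda)$ already delivers \pnlambda for every prime $p_n \geq c(\lambda)$. After renaming the index, this is precisely the condition \pklambda for all primes $p_k \geq c(\lambda)$, which is the right-hand side of the equivalence in Proposition~\ref{prop_a_1}.

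I would then invoke the ``if'' direction of Proposition~\ref{prop_a_1}: since \pklambda holds for all primes $p_k \geq c(\lambda)$, the interval $(n, n + \lambda \sqrt{n})$ contains a prime for every integer $n \geq c(\lambda)$, which is exactly the assertion to be proved.

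The only point that needs checking — and the only place one could slip — is that the threshold $c(\lambda)$ is consistent across the three statements. Corollary~\ref{cor_a_1} carries $c(\lambda)$ through unchanged (it merely requires $c(\lambda) > 25$ so that Nagura's bound $p_{n+1} < \frac{6}{5} p_n$ is available), and Proposition~\ref{prop_a_1} is stated for an arbitrary threshold, so the same $c(\lambda)$ may be used throughout with no loss. Hence there is no substantive obstacle: the corollary follows by a one-line chaining of Corollary~\ref{cor_a_1} into Proposition~\ref{prop_a_1}. If a self-contained derivation were preferred, one could instead transcribe the argument of the backward implication in Proposition~\ref{prop_a_1}, replacing the appeal to \pklambda by the bound \pnlambda obtained from the hypothesis via Corollary~\ref{cor_a_1}; but this would only reproduce reasoning already given.
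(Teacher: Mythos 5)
Your proposal matches the paper's own proof, which is exactly the one-line composition of Corollary~\ref{cor_a_1} with Proposition~\ref{prop_a_1}; your extra remark about the threshold $c(\lambda)$ being carried through consistently is a sensible check the paper leaves implicit. No issues.
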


\begin{proof}
Corollary \ref{cor_a_2} is a consequence of proposition \ref{prop_a_1} and corollary \ref{cor_a_1}. 
\end{proof}

\begin{proposition}\label{prop_a_2}
The interval $(n,\ngn)$, where $g(n) = o(1)$ and $g(n)\sqrt{n}$ is a non-decreasing function, contains a prime for every integer $n \geq c(g)$, where $c(g)$ is some constant; if and only if \gpk is true for any prime $p_k \geq c(g)$. 
\end{proposition}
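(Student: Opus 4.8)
The plan is to reproduce the argument of Proposition \ref{prop_a_1} almost verbatim, the one substantive change being that the role previously played by the monotonicity of $\sqrt{n}$ is now played by the hypothesis that $g(n)\sqrt{n}$ is non-decreasing. For the ``only if'' direction, given any prime $p_k \geq c(g)$, I apply the assumption to the integer $n = p_k$: the interval $(p_k,\gpkk)$ contains a prime $q$, so $p_k < q < \gpkk$, and since $p_k < p_{k+1} \leq q$ the inequality \gpk follows at once.

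For the ``if'' direction I argue by contradiction. Suppose \gpk holds for every prime $p_k \geq c(g)$ but some integer $n_0 \geq c(g)$ has $(n_0,\nogno)$ prime-free. If $n_0$ is itself prime, say $n_0 = p_m$, then $(n_0,\nogno)$ being prime-free forces $p_{m+1} - p_m \geq g(p_m)\sqrt{p_m}$, contradicting \gpk directly; so I may assume $n_0$ is composite and pick the consecutive primes $p_{n-1} < n_0 < p_n$. Since $(p_{n-1},n_0)$ is prime-free (consecutive primes), $n_0$ is not prime, and $(n_0,\nogno)$ is prime-free, the decomposition
\[
(p_{n-1},\nogno) = (p_{n-1},n_0)\ \cup\ \{n_0\}\ \cup\ (n_0,\nogno)
\]
shows $(p_{n-1},\nogno)$ is prime-free. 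Now, because $g(n)\sqrt{n}$ is non-decreasing and $p_{n-1} < n_0$, one has $g(p_{n-1})\sqrt{p_{n-1}} \leq g(n_0)\sqrt{n_0}$, hence $p_{n-1} + g(p_{n-1})\sqrt{p_{n-1}} < \nogno$, so that
\[
(p_{n-1},\ p_{n-1} + g(p_{n-1})\sqrt{p_{n-1}}) \ \subset\ (p_{n-1},\nogno).
\]
Thus the smaller interval contains no prime, contradicting \gpk at $p_{n-1}$ (legitimate once $p_{n-1} \geq c(g)$, which costs at most enlarging $c(g)$, exactly as in Proposition \ref{prop_a_1}).

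The only genuinely new ingredient relative to Proposition \ref{prop_a_1} is the monotonicity inequality $g(p_{n-1})\sqrt{p_{n-1}} \leq g(n_0)\sqrt{n_0}$, and it is precisely to license this step that the hypothesis ``$g(n)\sqrt{n}$ non-decreasing'' is imposed; everything else --- the decomposition of $(p_{n-1},\nogno)$ and the split on whether $n_0$ is prime --- is routine, so I foresee no real obstacle beyond the mild adjustment of $c(g)$ just noted. I would also remark that the other standing hypothesis, $g(n) = o(1)$, plays no part in the equivalence itself and is carried along only for the applications to come.
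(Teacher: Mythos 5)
Your proposal is correct and follows essentially the same route as the paper: the same reduction of the ``only if'' direction via $n = p_k$, and the same contradiction argument with the decomposition $(p_{n-1},\nogno) = (p_{n-1},n_0)\cup\{n_0\}\cup(n_0,\nogno)$ and the inclusion justified by the monotonicity of $g(n)\sqrt{n}$. Your explicit treatment of the case where $n_0$ is prime, and your remark that $g(n)=o(1)$ is not actually used in the equivalence, are small clarifications the paper leaves implicit, but they do not change the argument.
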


\begin{proof}
Let \ngn contain a prime for every integer $n \geq c(g)$. Then $(p_k, \gpkk)$, where $n = p_k$, contains a prime $q$ such that $p_k < q < \gpkk$. Since $p_k < p_{k+1} \leq q$, \gpk is true.
\newline
\newline
Let \gpk be true for every $p_k \geq c(g)$. Let $n_0$ be such an integer that the interval $(n_0, \nogno)$ contains no primes. Let $p_{n-1},p_n$ be such that $p_{n-1} < n_0 < p_n$, hence the interval $(p_{n-1}, p_{n-1} + g(p_{n-1})\sqrt{p_{n-1}})$ contains no primes. Since $n_0$ is not prime, the interval $(p_{n-1},\nogno) = (p_{n-1},n_0)\cup[n_0]\cup(n_0,\nogno)$ contains no primes. $(p_{n-1}, p_{n-1} + g(p_{n-1})\sqrt{p_{n-1}})  \subset (n_0,\nogno)$ since $p_{n-1} + g(p_{n-1})\sqrt{p_{n-1}} < \nogno$; then the interval $(p_{n-1}, p_{n-1} + g(p_{n-1})\sqrt{p_{n-1}})$ contains no primes, contradicting $\gpk$. 
\end{proof}

\begin{corollary}\label{cor_a_3}
Let $g(n) = o(1)$ and there exists a constant $c(g)$ such that the interval $(n,\ngn)$ contains a prime for every integer $n \geq c(g)$, then $\sqrt{p_{m+1}} - \sqrt{p_m} = o(1)$ is true.
\end{corollary}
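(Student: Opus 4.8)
The plan is to use only the ``only if'' (forward) direction of Proposition~\ref{prop_a_2} together with the elementary identity $\sqrt{b}-\sqrt{a}=(b-a)/(\sqrt{b}+\sqrt{a})$. The hypothesis of the corollary is precisely the geometric condition appearing in Proposition~\ref{prop_a_2}, so that proposition immediately yields the gap bound \gpk for every prime $p_k \geq c(g)$; from there the passage to square roots is a one-line estimate, and the desired $o(1)$ behaviour is inherited directly from $g(n) = o(1)$.

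Concretely, I would fix a prime $p_k \geq c(g)$ and apply Proposition~\ref{prop_a_2} to get $p_{k+1} - p_k < g(p_k)\sqrt{p_k}$. Then
\[
\sqrt{p_{k+1}} - \sqrt{p_k} = \frac{p_{k+1} - p_k}{\sqrt{p_{k+1}} + \sqrt{p_k}} < \frac{g(p_k)\sqrt{p_k}}{\sqrt{p_{k+1}} + \sqrt{p_k}} < \frac{g(p_k)\sqrt{p_k}}{2\sqrt{p_k}} = \frac{g(p_k)}{2},
\]
where the last inequality uses $\sqrt{p_{k+1}} > \sqrt{p_k}$. Since $g(n) = o(1)$ and $p_k \to \infty$ as $k \to \infty$, the right-hand side tends to $0$, hence $\sqrt{p_{k+1}} - \sqrt{p_k} \to 0$; relabelling the index $k$ as $m$ gives $\sqrt{p_{m+1}} - \sqrt{p_m} = o(1)$.

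I do not expect a genuine obstacle here: the argument uses neither the ``if'' direction of Proposition~\ref{prop_a_2} nor the monotonicity of $g(n)\sqrt{n}$, and the rationalization step is completely routine. The only point deserving a word of care is that $g(n) = o(1)$ is a statement about $n \to \infty$ through all integers, whereas here $g$ is evaluated along the sparser sequence of primes; but since $p_k \to \infty$ this still forces $g(p_k) \to 0$, so the conclusion stands.
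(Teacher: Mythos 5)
Your proof is correct, and it fills a genuine gap: the paper states Corollary~\ref{cor_a_3} with no proof at all. Your route --- invoking only the forward (``only if'') direction of Proposition~\ref{prop_a_2} to get $p_{k+1}-p_k < g(p_k)\sqrt{p_k}$ and then rationalizing via $\sqrt{p_{k+1}}-\sqrt{p_k} = (p_{k+1}-p_k)/(\sqrt{p_{k+1}}+\sqrt{p_k}) < g(p_k)/2 \to 0$ --- is clearly the intended argument, and your observations that neither the monotonicity of $g(n)\sqrt{n}$ nor the converse direction is needed, and that $g(p_k)\to 0$ follows from $g(n)=o(1)$ along the subsequence of primes, are exactly the right points of care.
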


\begin{proposition}\label{prop_a_3}
$\diffpn = O(f(p_n))$ is true if and only if $\diffsqrtpn = O(\frac{f(p_n)}{\sqrt{p_n}})$ is true.
\end{proposition}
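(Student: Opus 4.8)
The plan is to prove Proposition~\ref{prop_a_3} by a direct two-way implication argument, exploiting the elementary algebraic identity that relates the difference of square roots to the difference of the underlying quantities. The key observation is that
\[
\diffsqrtpn = \sqrt{p_{n+1}} - \sqrt{p_n} = \frac{p_{n+1} - p_n}{\sqrt{p_{n+1}} + \sqrt{p_n}},
\]
so that the ratio of the two quantities in question is exactly $\sqrt{p_{n+1}} + \sqrt{p_n}$, which is sandwiched between $2\sqrt{p_n}$ and $2\sqrt{p_{n+1}}$. To control the upper side of this sandwich I would invoke Nagura's bound $p_{n+1} < \tfrac{6}{5} p_n$ for $p_n > 25$ (already used in Corollary~\ref{cor_a_1}), which gives $\sqrt{p_{n+1}} + \sqrt{p_n} < (\sqrt{1.2} + 1)\sqrt{p_n}$; hence for all sufficiently large $n$ we have the two-sided estimate $2\sqrt{p_n} \le \sqrt{p_{n+1}} + \sqrt{p_n} < (\sqrt{1.2}+1)\sqrt{p_n}$, i.e. the ratio is $\Theta(\sqrt{p_n})$.

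First I would establish the forward direction: assume $\diffpn = O(f(p_n))$, so $\diffpn \le C f(p_n)$ for $n \ge n_0$ and some constant $C$. Then using the identity and the lower bound $\sqrt{p_{n+1}} + \sqrt{p_n} \ge 2\sqrt{p_n}$, one gets $\diffsqrtpn \le \frac{C f(p_n)}{2\sqrt{p_n}}$, which is exactly $O\!\left(\frac{f(p_n)}{\sqrt{p_n}}\right)$. Conversely, assume $\diffsqrtpn = O\!\left(\frac{f(p_n)}{\sqrt{p_n}}\right)$, so $\diffsqrtpn \le C' \frac{f(p_n)}{\sqrt{p_n}}$ for $n$ large. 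Multiplying the identity through by $\sqrt{p_{n+1}} + \sqrt{p_n}$ and applying the upper bound $\sqrt{p_{n+1}} + \sqrt{p_n} < (\sqrt{1.2}+1)\sqrt{p_n}$ from Nagura's estimate, one obtains $\diffpn < C'(\sqrt{1.2}+1) f(p_n)$, i.e. $\diffpn = O(f(p_n))$. Since only finitely many primes fail $p_n > 25$, adjusting the implied constants absorbs them, so the equivalence holds without qualification.

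The main (and really the only) obstacle is the upper bound on $\sqrt{p_{n+1}} + \sqrt{p_n}$ in terms of $\sqrt{p_n}$: a priori, $O(f(p_n))$ and $O(f(p_{n+1}))$ need not coincide, so one must rule out the scenario where $p_{n+1}$ is dramatically larger than $p_n$. This is precisely what Nagura's ratio bound prevents, and it is the reason the statement is clean. A minor bookkeeping point is that $f$ is implicitly assumed to vary slowly enough that $f(p_{n+1}) = O(f(p_n))$ if one wanted to phrase the bound with $p_{n+1}$ in the denominator instead; but since the proposition as stated keeps everything in terms of $p_n$, this does not arise, and the proof reduces to the algebraic identity together with the two elementary inequalities $2\sqrt{p_n} \le \sqrt{p_{n+1}} + \sqrt{p_n} < (\sqrt{1.2}+1)\sqrt{p_n}$.
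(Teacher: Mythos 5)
Your proof is correct and follows essentially the same route as the paper: the backward direction uses Nagura's bound $p_{n+1} < \tfrac{6}{5}p_n$ to obtain the factor $(\sqrt{1.2}+1)$, exactly as the paper does. The only difference is that you make explicit the identity $\sqrt{p_{n+1}} - \sqrt{p_n} = \frac{p_{n+1}-p_n}{\sqrt{p_{n+1}}+\sqrt{p_n}}$ and the lower bound $\sqrt{p_{n+1}}+\sqrt{p_n} \ge 2\sqrt{p_n}$ in the forward direction, which the paper leaves implicit behind a bare ``Hence''.
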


\begin{proof}
Let $\diffpn = O(f(p_n))$ be true. Then there exist such constants $k, N_k$ that $\diffpn < kf(p_n)$ is true for every $p_n \geq N_k$. Hence, $\diffsqrtpn = O(\frac{f(p_n)}{\sqrt{p_n}})$ is true.
\newline
\newline
Let $\diffsqrtpn = O(\frac{f(p_n)}{\sqrt{p_n}})$ be true. Then there exist such constants $k, N_k > 25$ that $\diffsqrtpn < \frac{kf(p_n)}{\sqrt{p_n}}$ is true for any $p_n \geq N_k$. Then, $\diffpn < (\sqrt{1.2} + 1)kf(p_n)$ is true according to \cite{Nagura:1952aa}, and therefore $\diffpn = O(f(p_n))$ is also true.
\end{proof}

\begin{proposition}\label{prop_a_4}
Let Cramer's conjecture be true, then there exists some infinite subset of primes $E$ such that for every prime $p_n \in E$, $\frac{\ln(p_n)}{\sqrt{p_n}} < \diffsqrtpn < \frac{k \log^2(p_n)}{\sqrt{p_n}}$ is true.
\end{proposition}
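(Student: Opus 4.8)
The plan is to build $E$ out of the primes that begin an unusually large gap, using a classical \emph{unconditional} large-gap result for the left inequality and Cram\'er's conjecture together with Proposition~\ref{prop_a_3} for the right inequality. Set $C = \sqrt{1.2} + 1$. First I would recall the classical theorem of Westzynthius (later refined by Erd\H{o}s and others) that $\limsup_{n \to \infty} \frac{p_{n+1} - p_n}{\ln p_n} = \infty$; in particular there are infinitely many primes $p_n$ with $p_{n+1} - p_n > C\ln p_n$. Let $E_0$ be the set of all such primes, and note the subset of those also satisfying $p_n > 25$ is still infinite.

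Next I would establish the lower bound on this subset. Writing the telescoping identity $\sqrt{p_{n+1}} - \sqrt{p_n} = \frac{p_{n+1} - p_n}{\sqrt{p_{n+1}} + \sqrt{p_n}}$ and using Nagura's inequality $p_{n+1} < \tfrac{6}{5}p_n$ for $p_n > 25$ \cite{Nagura:1952aa}, one gets $\sqrt{p_{n+1}} + \sqrt{p_n} < (\sqrt{1.2} + 1)\sqrt{p_n} = C\sqrt{p_n}$. Combining this with $p_{n+1} - p_n > C\ln p_n$ yields
\[
\sqrt{p_{n+1}} - \sqrt{p_n} = \frac{p_{n+1} - p_n}{\sqrt{p_{n+1}} + \sqrt{p_n}} > \frac{C\ln p_n}{C\sqrt{p_n}} = \frac{\ln p_n}{\sqrt{p_n}},
\]
which is the desired left inequality.

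For the right inequality I would invoke Cram\'er's conjecture in the form $p_{n+1} - p_n = O(\log^2 p_n)$ and apply Proposition~\ref{prop_a_3} with $f(p_n) = \log^2 p_n$: this gives $\sqrt{p_{n+1}} - \sqrt{p_n} = O\!\left(\frac{\log^2 p_n}{\sqrt{p_n}}\right)$, i.e. there are constants $k$ and $N$ with $\sqrt{p_{n+1}} - \sqrt{p_n} < \frac{k\log^2 p_n}{\sqrt{p_n}}$ for every prime $p_n \geq N$. Finally set $E = \{\, p_n \in E_0 : p_n > 25,\ p_n \geq N \,\}$; since we have only discarded finitely many primes, $E$ is still infinite, and by construction every $p_n \in E$ satisfies both inequalities simultaneously, completing the argument.

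The main obstacle — really the only non-bookkeeping ingredient — is the left inequality: it rests on having an unconditional large-gap bound strong enough to beat the constant $C = \sqrt{1.2} + 1 \approx 2.095$, which the crude ``average gap is $\sim \ln p_n$'' heuristic coming from the prime number theorem does not deliver on its own; this is precisely why one appeals to Westzynthius's theorem. Everything else is routine: the telescoping identity for $\sqrt{p_{n+1}} - \sqrt{p_n}$, Nagura's ratio bound to control the denominator, and Proposition~\ref{prop_a_3} to transfer Cram\'er's conjecture to the square-root scale. One should also take care that the finitely many side constraints ($p_n > 25$, $p_n \geq N$) only remove finitely many primes from $E_0$, so the infinitude of $E$ is preserved.
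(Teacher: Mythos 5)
Your proof is correct and follows essentially the same route as the paper: Cram\'er's conjecture together with Proposition~\ref{prop_a_3} for the upper bound, Westzynthius's large-gap theorem for the lower bound, and an intersection of sets each missing only finitely many elements. The sole difference is cosmetic --- you transfer the Westzynthius gaps to the square-root scale by an explicit computation with Nagura's bound, whereas the paper invokes the contrapositive of Proposition~\ref{prop_a_3}; your version has the minor merit of making the constant $\sqrt{1.2}+1$ that must be beaten explicit.
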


\begin{proof}
According to Cramer's conjecture \cite{Cramer:1936aa}, $\diffpn = O(\log^2(p_n))$ and proposition \ref{prop_a_3}, $\diffsqrtpn = O(\frac{\log^2(p_n)}{\sqrt{p_n}})$ is true. Then, there exist such $k, N_k$ that for every $p_n \geq N_k$, $\diffsqrtpn < \frac{k\log^2(p_n)}{\sqrt{p_n}}$ is true. Furthermore, $\diffpn = O(\log(p_n))$ is not true according to E. Westzynthius and so $\diffsqrtpn = O(\frac{\ln(p_n)}{\sqrt{p_n}})$ is also not true according to proposition \ref{prop_a_3}. Therefore there exists an infinite set of primes $S$ such that $\frac{\ln(p_n)}{\sqrt{p_n}} < \diffsqrtpn$ is true. Taking $E = \{ p_n \in S | p_n \geq N_k \}$, the inequality $\frac{\ln(p_n)}{\sqrt{p_n}} < \diffsqrtpn < \frac{k \log^2(p_n)}{\sqrt{p_n}}$ is true for any $p_n \in E$.
\end{proof}

\section{Legendre's conjecture}

\begin{conjecture}[Legendre]\label{legendre_conjecture}
The interval $(n^2,(n+1)^2)$ contains a prime for any $n \in \mathbb{N}$.
\end{conjecture}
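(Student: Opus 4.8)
The plan is to reduce Legendre's conjecture to the assertion that a short interval just above $n^2$ contains a prime, and to extract that from the working hypothesis by means of Propositions \ref{prop_a_1} and \ref{prop_a_3}. The hypothesis $\diffpn = O(\sqrt{p_n})$ supplies constants $\lambda, c(\lambda)$ for which \pnlambda\ holds for all primes $p_n \ge c(\lambda)$. By Proposition \ref{prop_a_1} this is equivalent to saying that $(m, m + \lambda\sqrt{m})$ contains a prime for every integer $m \ge c(\lambda)$. Specialising $m = n^2$, the interval $(n^2, n^2 + \lambda n)$ then contains a prime; and $\lambda n \le 2n < 2n + 1$ as soon as $\lambda \le 2$, so $(n^2, n^2 + \lambda n) \subseteq (n^2, (n+1)^2)$. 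Hence every $n$ with $n^2 \ge c(\lambda)$ admits a prime in $(n^2, (n+1)^2)$, provided $\lambda \le 2$.

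I would record the same fact in its equivalent ``square-root'' form, since that is where the role of the constant is clearest. By Proposition \ref{prop_a_3} (with $f(p_n) = \sqrt{p_n}$) the hypothesis is equivalent to $\diffsqrtpn = O(1)$; more precisely, from \pnlambda\ and the identity
\[
\diffsqrtpn \;=\; \frac{\diffpn}{\sqrt{p_{n+1}} + \sqrt{p_n}} \;<\; \frac{\lambda\sqrt{p_n}}{2\sqrt{p_n}} \;=\; \frac{\lambda}{2}
\]
one gets $\diffsqrtpn < \lambda/2$ for all large $p_n$ (Corollary \ref{cor_a_2}, via Nagura's bound, gives the cognate constant $\lambda/(\sqrt{1.2}+1)$). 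The contradiction argument is then immediate: if $(n^2,(n+1)^2)$ were prime-free and $p_m$ is the largest prime with $p_m \le n^2$, then, since $(n+1)^2$ is a perfect square and hence not prime, the successor satisfies $p_{m+1} \ge (n+1)^2$, so $\diffsqrtpn = \sqrt{p_{m+1}} - \sqrt{p_m} \ge (n+1) - n = 1$, contradicting $\diffsqrtpn < \lambda/2$ once $\lambda \le 2$. The finitely many $n$ with $n^2 < c(\lambda)$ are disposed of by the known verification of Legendre's conjecture (and, for the smallest values of $n$, by Nagura's inequality, which for those $n$ already places a prime in $(n^2,\tfrac{6}{5}n^2)\subseteq(n^2,(n+1)^2)$).

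The step I expect to be the only genuine obstacle is precisely the passage from ``$\diffpn = O(\sqrt{p_n})$'' to a bound with an explicit usable constant: the $O$-statement by itself merely says that $\diffsqrtpn$ is bounded, whereas the argument above needs $\diffsqrtpn < 1$ for all large primes (equivalently, that the implied constant in the hypothesis is at most $2$). If one takes the working hypothesis in the marginally stronger shape $\diffpn = o(\sqrt{p_n})$, the difficulty disappears: Corollary \ref{cor_a_3} then gives $\diffsqrtpn = o(1)$, hence $\diffsqrtpn < 1$ eventually, and the argument of the second paragraph goes through with no constant-chasing at all. Everything else — the two reductions via Propositions \ref{prop_a_1} and \ref{prop_a_3}, the prime-free-interval contradiction, and the finite check of small $n$ — is routine.
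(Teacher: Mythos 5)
Your argument is correct as a conditional proof and lands on the same essential idea as the paper --- a prime-gap bound with explicit constant $2$ forces a prime between consecutive squares --- but you route it differently. The paper works \emph{backwards} from the upper square: it assumes $p_{k} - p_{k-1} < 2\sqrt{p_k}$, invokes Lemma \ref{lemma_b_1} (the interval $(n-2\sqrt{n},n)$ version of the equivalence) at $n = m^2$ to get a prime in $(m^2 - 2m, m^2)$, and then observes that $(m^2-2m,(m-1)^2)$ contains no integers and $(m-1)^2$ is not prime, so the prime lies in $((m-1)^2,m^2)$. You instead work \emph{forwards} from the lower square via Proposition \ref{prop_a_1}: a prime in $(n^2, n^2+\lambda n)$ with $\lambda \le 2$ already sits inside $(n^2,(n+1)^2)$, with no integer-gap observation needed. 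Your second, ``square-root'' formulation ($\sqrt{p_{m+1}} - \sqrt{p_m} \ge 1$ across a prime-free $(n^2,(n+1)^2)$, versus $\sqrt{p_{n+1}}-\sqrt{p_n} < \lambda/2 \le 1$) is a clean contradiction argument the paper does not spell out, and it makes the role of the constant more transparent. The forward version is marginally simpler; the paper's backward version is what its Section 3 machinery (Lemma \ref{lemma_b_1}) is set up for.

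Your closing caveat is well taken and applies equally to the paper: the bare hypothesis $p_{n+1}-p_n = O(\sqrt{p_n})$ supplies \emph{some} constant, not necessarily one $\le 2$, so neither your argument nor the paper's actually derives Legendre from the $O$-statement alone. The paper silently upgrades to the explicit bound $p_{n+1}-p_n < 2\sqrt{p_{n+1}}$ in its proof (and admits as much in the conclusion), exactly the ``constant-chasing'' obstacle you identify; your observation that the $o(\sqrt{p_n})$ form of the hypothesis (via Corollary \ref{cor_a_3}) removes the issue is a genuine improvement in framing. One small point: for the finitely many $n$ with $n^2 < c(\lambda)$ you should rely on direct verification rather than Nagura's $p_{n+1} < \tfrac{6}{5}p_n$, since $(n^2, \tfrac{6}{5}n^2) \subseteq (n^2,(n+1)^2)$ only holds for $n \le 10$ or so and Nagura's bound concerns gaps after primes, not after arbitrary integers; but this does not affect the substance.
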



\begin{lemma}[]\label{lemma_b_1}
The interval $(n - 2\sqrt{n},n)$ contains a prime for all $n \geq 4$ if and only if $p_k - p_{k-1} < 2\sqrt{p_k}$ is true for all $p_k \geq 3$.
\end{lemma}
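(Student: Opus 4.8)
The plan is to prove both implications by the same device used in Proposition~\ref{prop_a_1}, the only adjustment being that the interval now sits to the \emph{left} of $n$, so the monotone function doing the work is $\varphi(x) = x - 2\sqrt{x}$ rather than $x + \lambda\sqrt{x}$; here one records that $\varphi$ is strictly increasing on $(1,\infty)$, since $\varphi'(x) = 1 - 1/\sqrt{x}$ is positive for $x > 1$.

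For the forward implication I would assume $(n - 2\sqrt{n}, n)$ contains a prime for every integer $n \geq 4$, fix a prime $p_k \geq 3$, and split into the single case $p_k = 3$, where $p_k - p_{k-1} = 1 < 2\sqrt{3}$ is immediate, and the case $p_k \geq 5$ (hence $p_k \geq 4$), where applying the hypothesis at $n = p_k$ produces a prime $q$ with $p_k - 2\sqrt{p_k} < q < p_k$; since $q$ is a prime strictly below $p_k$ we have $q \leq p_{k-1}$, and therefore $p_k - p_{k-1} \leq p_k - q < 2\sqrt{p_k}$.

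For the converse I would argue by contradiction: assume $p_k - p_{k-1} < 2\sqrt{p_k}$ for all primes $p_k \geq 3$ and suppose some integer $n_0 \geq 4$ makes $(n_0 - 2\sqrt{n_0}, n_0)$ prime-free. Let $p_n$ be the least prime with $p_n \geq n_0$ (so $p_n \geq 5$, and $p_{n-1}$ exists). First I would observe that $[n_0, p_n)$ is prime-free — it is empty when $n_0$ is prime, and otherwise $n_0$ is composite with no prime in $(n_0, p_n)$ by minimality of $p_n$ — so that $(n_0 - 2\sqrt{n_0}, p_n)$ has no prime. Then, using monotonicity of $\varphi$ on $(1,\infty)$ together with $p_n \geq n_0 \geq 4$, I would get $p_n - 2\sqrt{p_n} \geq n_0 - 2\sqrt{n_0}$, hence $(p_n - 2\sqrt{p_n}, p_n) \subseteq (n_0 - 2\sqrt{n_0}, p_n)$ is prime-free; this gives $p_{n-1} \leq p_n - 2\sqrt{p_n}$, i.e.\ $p_n - p_{n-1} \geq 2\sqrt{p_n}$, contradicting the hypothesis applied at $p_n$.

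I do not expect a deep obstacle here; the only computations are the $p_k = 3$ check and the elementary monotonicity of $\varphi$. The one thing that genuinely needs care — and what I would treat as the main point — is aligning the thresholds, making sure that ``$n \geq 4$'' in the interval statement and ``$p_k \geq 3$'' in the gap statement are compatible and that every appeal to the hypothesis is to an admissible prime (resp.\ integer), in particular $p_n \geq 5 \geq 3$ in the converse and the explicit separate handling of $p_k = 3$ in the forward direction.
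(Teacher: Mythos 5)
Your proposal is correct and follows essentially the same route as the paper's own proof: apply the hypothesis at $n = p_k$ for the forward direction, and for the converse derive a contradiction from the inclusion $(p_n - 2\sqrt{p_n}, p_n) \subseteq (n_0 - 2\sqrt{n_0}, p_n)$ via monotonicity of $x - 2\sqrt{x}$. The only differences are that you explicitly patch two edge cases the paper glosses over — the prime $p_k = 3$ (which lies below the threshold $n \geq 4$) and the possibility that $n_0$ is itself prime — both of which are handled correctly.
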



\begin{proof}
Let $(n - 2\sqrt{n},n)$ contain a prime, then the interval $(p_k - 2\sqrt{p_k},p_k)$ where $n = p_k$ contains a prime $q$. Therefore $p_k -2\sqrt{p_k} < q < p_k$, and since $q \leq p_{k-1} < p_k$, $p_k - p_{k-1} < 2\sqrt{p_k}$ is true.
\newline
\newline
Let $p_k - p_{k-1} < 2\sqrt{p_k}$ be true for all $p_k \geq 3$, but there exists such $n_0$ that $(n_0 - 2\sqrt{n_0},n_0)$ contains no primes. Let $p_{n-1},p_n$ be such primes that $p_{n-1} < n_0 < p_n$. Then the interval $(p_n - 2\sqrt{p_n},p_n)$ contains no primes. Since $n_0$ is not prime, the interval $(n_0 - 2\sqrt{n_0}, p_n) = (n_0 - 2\sqrt{n_0},n_0) \cup [n_0] \cup (n_0,p_n)$ contains no primes. Moreover, $(p_n - 2\sqrt{p_n},p_n) \subset (n_0 - 2\sqrt{n_0},p_n)$ since $n_0 - 2\sqrt{n_0} < p_n - 2\sqrt{p_n}$ so the interval $(p_n - 2\sqrt{p_n},p_n)$ contains no primes, contradicting $p_k - p_{k-1} < 2\sqrt{p_k}$. 
\end{proof}

\begin{proof}[Proof of conjecture \ref{legendre_conjecture} (Legendre)] 
Let $p_{n+1} - p_n < 2 \sqrt{p_{n+1}}$ be true, then according to lemma \ref{lemma_b_1} the interval $(m^2 - 2m,m^2)$ where $n = m^2$ contains a prime. Since $(m^2 - 2m,(m-1)^2)$ contains no integers and $(m-1)^2$ is not prime, then $((m-1)^2,m^2)$ contains a prime for every $m \geq 2$. 
\end{proof}

\section{Oppermann's conjecture}

\begin{conjecture}[Oppermann]\label{oppermann}
The interval $(n^2,(n+1)^2)$ contains two primes for any $n \in \mathbb{N}$.
\end{conjecture}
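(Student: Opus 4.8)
The plan is to follow the proof of Legendre's conjecture but to split the interval $(m^2,(m+1)^2)$ at its near-midpoint $m(m+1) = m^2 + m$ and to exhibit a prime in \emph{each} half, $(m^2,m(m+1))$ and $(m(m+1),(m+1)^2)$. Since $m(m+1) = m^2 + m$ and $(m+1)^2 = m(m+1) + (m+1)$, both halves have length essentially $m \approx \sqrt{m^2}$; thus, whereas the Legendre argument could use an interval of type $(n,n+2\sqrt n)$, here one must use $(n,n+\sqrt n)$, that is, the $\lambda = 1$ specialization of the condition in Proposition \ref{prop_a_1}.

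Concretely, I would take the working hypothesis in the form $p_{k+1} - p_k < \sqrt{p_k}$ for all primes $p_k \geq c$. By Proposition \ref{prop_a_1} with $\lambda = 1$ this is equivalent to: the interval $(n,n+\sqrt n)$ contains a prime for every integer $n \geq c$. Applying this at $n = m^2$ gives a prime $q_1$ with $m^2 < q_1 < m^2 + \sqrt{m^2} = m(m+1)$; applying it at $n = m(m+1)$, which exceeds $c$ as soon as $m^2 \geq c$, gives a prime $q_2$ with $m(m+1) < q_2 < m(m+1) + \sqrt{m(m+1)}$. Because $m^2 + m < m^2 + 2m + 1 = (m+1)^2$, one has $\sqrt{m(m+1)} < m+1$, so $q_2 < m(m+1) + (m+1) = (m+1)^2$. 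Hence $m^2 < q_1 < m(m+1) < q_2 < (m+1)^2$, so $q_1$ and $q_2$ are two distinct primes in $(m^2,(m+1)^2)$, which proves Conjecture \ref{oppermann} for all $m$ with $m^2 \geq c$; the finitely many $m$ with $m^2 < c$ are disposed of by direct computation.

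The main obstacle is the size of the implied constant. Legendre's conjecture needed only $p_{n+1} - p_n < 2\sqrt{p_{n+1}}$, i.e.\ $\lambda \le 2$, whereas the split above forces $\lambda \le 1$, which is strictly stronger than what the bare statement $p_{n+1} - p_n = O(\sqrt{p_n})$ guarantees, its implied constant being unspecified; so one has to read the hypothesis in its $\lambda = 1$ form (equivalently, $\sqrt{p_{n+1}} - \sqrt{p_n}$ small enough, via Corollary \ref{cor_a_2}). The margin is moreover tight: the inclusion $q_2 < (m+1)^2$ relies on $\sqrt{m(m+1)} < m+1$, whose slack tends to $0$ as $m \to \infty$, so any $\lambda > 1$ defeats the second half of the split for all large $m$ and leaves only a Legendre-type conclusion. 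The remaining points are routine: the finitely many small $m$, and, if one wishes to avoid invoking Proposition \ref{prop_a_1} at a fixed $\lambda$, recording a dedicated lemma in the style of Lemma \ref{lemma_b_1} to the effect that $(n,n+\sqrt n)$ contains a prime for all $n \geq c$ if and only if $p_{k+1} - p_k < \sqrt{p_k}$ for all primes $p_k \geq c$.
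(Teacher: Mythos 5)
Your argument is correct and rests on the same core idea as the paper: split $(m^2,(m+1)^2)$ at $m^2+m$ and produce a prime in each half from the hypothesis that consecutive prime gaps are below $\sqrt{p}$ with constant $1$. The difference is in how the right half is handled. You apply the forward-interval statement of Proposition \ref{prop_a_1} (with $\lambda=1$) twice, at $n=m^2$ and at $n=m(m+1)$, and close the second interval inside $(m+1)^2$ via the tight estimate $\sqrt{m(m+1)}<m+1$. The paper instead develops dedicated two-sided machinery (Propositions \ref{prop_c_1} and \ref{prop_c_2}, asserting that both $(n-\sqrt{n},n)$ and $(n,n+\sqrt{n})$ contain primes) and anchors the right half \emph{backward} from $(m+1)^2$, so that the interval $((m+1)^2-(m+1),(m+1)^2)=(m^2+m,(m+1)^2)$ lands exactly on the half without any slack computation. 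Your route is the more economical one structurally, since it reuses the Section 2 equivalence and makes Propositions \ref{prop_c_1}--\ref{prop_c_2} unnecessary; the paper's route buys an exact fit of the interval to the right half and a symmetric statement that may be of independent interest. Both arguments are, of course, conditional on the unproven gap bound with constant $1$, which, as you correctly note, is strictly stronger than the bare assertion $p_{n+1}-p_n=O(\sqrt{p_n})$; the paper's proof has exactly the same dependence.
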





\begin{proposition}\label{prop_c_1}
The intervals $(l - \sqrt{l},l)$ and $(l,l+\sqrt{l})$ contain primes for every $l \geq p_{32} = 131$ if and only if $p_k - p_{k-1} < \sqrt{p_{k-1}}$ is true every prime $p_k \geq 131$.
\end{proposition}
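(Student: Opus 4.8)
The plan is to prove this equivalence in the same style as Proposition~\ref{prop_a_1} and Lemma~\ref{lemma_b_1}, handling the two interval conditions one at a time and using that both $x+\sqrt{x}$ and $x-\sqrt{x}$ are strictly increasing for $x\geq 1$. The forward implication is obtained by specializing the interval hypothesis to arguments that are themselves primes; the backward implication, conversely, places the relevant consecutive prime directly inside the required interval.

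For the forward direction I would assume both $(l-\sqrt{l},l)$ and $(l,l+\sqrt{l})$ contain a prime for every integer $l\geq 131$ and fix a prime $p_k\geq 131$. When $p_{k-1}\geq 131$ I apply the hypothesis with $l=p_{k-1}$: the interval $(p_{k-1},p_{k-1}+\sqrt{p_{k-1}})$ contains a prime $q$, and since $p_k$ is the least prime exceeding $p_{k-1}$ we get $p_k\leq q<p_{k-1}+\sqrt{p_{k-1}}$, that is $p_k-p_{k-1}<\sqrt{p_{k-1}}$. The only prime $p_k\geq 131$ with $p_{k-1}<131$ is $p_k=131$, where $p_{k-1}=p_{31}=127$ and $p_k-p_{k-1}=4<\sqrt{127}$, which I would simply verify by hand. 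Note that this half uses only the right-hand interval, and that taking $l=p_{k-1}$ rather than $l=p_k$ is exactly what yields the bound with $\sqrt{p_{k-1}}$ instead of the weaker $\sqrt{p_k}$.

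For the backward direction I would assume $p_k-p_{k-1}<\sqrt{p_{k-1}}$ for every prime $p_k\geq 131$ and fix an integer $l\geq 131$. For the interval $(l,l+\sqrt{l})$, let $p_{k-1}$ be the largest prime $\leq l$ and $p_k$ its successor; then $p_k\geq 137$, so the hypothesis together with $p_{k-1}\leq l$ gives $p_k<p_{k-1}+\sqrt{p_{k-1}}\leq l+\sqrt{l}$, while $p_k>l$, so $p_k\in(l,l+\sqrt{l})$. For the interval $(l-\sqrt{l},l)$, let $p_m$ be the largest prime $<l$ and $p_{m+1}$ its successor; then $p_{m+1}\geq l\geq 131$, so $p_m>p_{m+1}-\sqrt{p_m}\geq l-\sqrt{p_m}$, and since $p_m<l$ the last quantity is $\geq l-\sqrt{l}$, while $p_m<l$, so $p_m\in(l-\sqrt{l},l)$. (Alternatively one may run the contradiction argument of Proposition~\ref{prop_a_1}: a counterexample $l_0\geq 131$ to either interval would place a prime-gap interval $(p_{n-1},p_{n-1}+\sqrt{p_{n-1}})$ or $(p_n-\sqrt{p_n},p_n)$ inside a prime-free interval attached to $l_0$, contradicting the hypothesis.)

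I do not expect a genuine obstacle; the work is essentially bookkeeping. The one point that needs attention is that the hypothesis is phrased with $\sqrt{p_{k-1}}$, the \emph{smaller} of the two consecutive primes, so in the $(l-\sqrt{l},l)$ step one must notice that $l-\sqrt{p_m}\geq l-\sqrt{l}$ precisely because $p_m<l$, and in the forward step one must use $l=p_{k-1}$ rather than $l=p_k$. Keeping every inequality oriented correctly and confirming that every prime entering the argument exceeds the threshold $131=p_{32}$ (with the lone residual case $p_k=131$ checked numerically) is all that the proof requires.
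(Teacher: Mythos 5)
Your proof is correct, and the forward direction is essentially the paper's: you apply the right-hand interval at $l=p_{k-1}$ (equivalently, the paper's $p_{n+1}-p_n<\sqrt{p_n}$ after relabelling) and patch the single boundary case $p_k=131$, $p_{31}=127$ by hand. The backward direction, however, is genuinely different in scope and in style. The paper's own proof of Proposition~\ref{prop_c_1} silently restricts $l$ to \emph{primes} (it writes ``for every prime $l$'' and sets $l=p_n$), so it only needs to observe that $p_{n-1}$ and $p_{n+1}$ land in the two intervals; the extension to arbitrary integers is deferred to Proposition~\ref{prop_c_2}, which is proved by a two-case contradiction argument. You instead treat an arbitrary integer $l\geq 131$ directly: choosing the largest prime $\leq l$ (resp.\ $<l$) and using the monotonicity of $x+\sqrt{x}$ together with $p_m<l\Rightarrow l-\sqrt{p_m}>l-\sqrt{l}$, you place the relevant consecutive prime inside each interval constructively. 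This buys you two things: your argument specializes to the paper's when $l$ is prime, and it already contains all of Proposition~\ref{prop_c_2}, making the paper's separate contradiction argument redundant; it is also arguably cleaner, since it avoids the reductio scaffolding. The one presentational caveat is that you are proving a slightly stronger statement than the paper's proof actually establishes (the statement's wording ``every $l\geq p_{32}$'' is ambiguous between integers and primes), but since the integer version implies the prime version, your proof covers the proposition under either reading.
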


\begin{proof}
Let $(l - \sqrt{l},l)$ and $(l,l + \sqrt{l})$ contain primes for every prime $l \geq p_{32} = 131$. Let $p$ and $q$ respectively belong to the intervals $(p_n - \sqrt{p_n},p_n)$, $(p_n,p_n  + \sqrt{p_n})$ where $l = p_n$. Since $p \leq p_{n-1} < p_n$ and $p_n < p_{n+1} \leq q$, so $p_{n-1}$ and $p_{n+1}$ also belong to $(p_n - \sqrt{p_n},p_n)$,$(p_n,p_n + \sqrt{p_n})$. Thus:
\begin{equation}
p_n - p_{n-1} < p_n - (p_n - \sqrt{p_n}) = \sqrt{p_n} , p_{n+1} - p_n < (p_n + \sqrt{p_n}) - p_n = \sqrt{p_n}
\end{equation}
and since $p_{32} - p_{31} < \sqrt{p_{31}}$, therefore $p_k - p_{k-1} < \sqrt{p_{k-1}}$ is true for every prime $p_k \geq p_{32}$.
\newline
\newline
Let $p_k - p_{k-1} < \sqrt{p_{k-1}}$ is true for every prime $p_k \geq p_{32}$ and $l=p_n \geq p_{32}$, then $p_n - p_{n-1} < \sqrt{p_{n-1}} < \sqrt{p_n}$ and also $p_{n+1} - p_n < \sqrt{p_n}$ hence $p_{n-1} \in (p_n - \sqrt{p_n},p_n)$ and $p_{n+1} \in (p_n,p_n + \sqrt{p_n})$. Thus we have that the intervals $(p_n -\sqrt{p_n},p_n)$,$(p_n,p_n + \sqrt{p_n})$ contain primes.
\end{proof}

\begin{proposition}\label{prop_c_2}
The intervals $(n - \sqrt{n},n)$ and $(n,n+\sqrt{n})$ contain primes for every integer $n \geq 131$ if and only if $p_k - p_{k-1} < \sqrt{p_{k-1}}$ is true for all primes $p_k \geq 131$.
\end{proposition}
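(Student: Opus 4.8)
The plan is to reduce Proposition~\ref{prop_c_2} to Proposition~\ref{prop_c_1} by proving that the statement ``the intervals $(n-\sqrt{n},n)$ and $(n,n+\sqrt{n})$ contain primes for every integer $n \geq 131$'' is equivalent to the statement ``the intervals $(l-\sqrt{l},l)$ and $(l,l+\sqrt{l})$ contain primes for every prime $l \geq 131$''. Once this is in hand, Proposition~\ref{prop_c_1} immediately supplies the remaining equivalence with $p_k - p_{k-1} < \sqrt{p_{k-1}}$, and the chain of equivalences closes.

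One direction is trivial: every prime $l \geq 131$ is in particular an integer $\geq 131$, so the integer statement implies the prime statement by restriction. For the converse I would argue by contradiction, following the template already used in Proposition~\ref{prop_a_1} and Lemma~\ref{lemma_b_1}. Assume the prime statement (equivalently $p_k - p_{k-1} < \sqrt{p_{k-1}}$ for all primes $p_k \geq 131$) holds, but some integer $n_0 \geq 131$ violates one of the two interval conditions. If $n_0$ is itself prime, this contradicts Proposition~\ref{prop_c_1} directly, so I may assume $n_0$ is composite and choose consecutive primes $p_{m-1} < n_0 < p_m$. Since $131$ is prime and $n_0 > 131$, we have $p_{m-1} \geq 131$ and $p_m > 131$, so Proposition~\ref{prop_c_1} applies to both flanking primes.

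Now I split into two cases. Suppose $(n_0, n_0+\sqrt{n_0})$ contains no prime. Because $(p_{m-1},n_0)$ contains no prime and $n_0$ is composite, the interval $(p_{m-1}, n_0+\sqrt{n_0}) = (p_{m-1},n_0) \cup [n_0] \cup (n_0, n_0+\sqrt{n_0})$ contains no prime; since $x \mapsto x+\sqrt{x}$ is increasing, $p_{m-1}+\sqrt{p_{m-1}} < n_0+\sqrt{n_0}$, hence $(p_{m-1}, p_{m-1}+\sqrt{p_{m-1}}) \subset (p_{m-1}, n_0+\sqrt{n_0})$ contains no prime, contradicting Proposition~\ref{prop_c_1}. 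Symmetrically, suppose $(n_0-\sqrt{n_0}, n_0)$ contains no prime. Then $(n_0-\sqrt{n_0}, p_m) = (n_0-\sqrt{n_0},n_0) \cup [n_0] \cup (n_0, p_m)$ contains no prime, and since $x \mapsto x - \sqrt{x}$ is increasing, $n_0 - \sqrt{n_0} < p_m - \sqrt{p_m}$, so $(p_m - \sqrt{p_m}, p_m) \subset (n_0-\sqrt{n_0}, p_m)$ contains no prime, again contradicting Proposition~\ref{prop_c_1}.

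I do not expect a genuine obstacle here: this is the same ``an interior integer is trapped between two consecutive primes'' device deployed twice already in the paper, combined with the monotonicity of $x \mapsto x \pm \sqrt{x}$. The only point requiring care is the bookkeeping at the lower endpoint $n = 131$: one must verify that for every composite $n_0 > 131$ the flanking primes $p_{m-1}, p_m$ are still $\geq 131$, which is exactly what makes Proposition~\ref{prop_c_1} applicable, and this holds precisely because $131 = p_{32}$ is itself prime.
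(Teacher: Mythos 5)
Your proposal is correct and follows essentially the same route as the paper: both directions reduce to Proposition~\ref{prop_c_1}, and the converse uses the identical two-case ``composite $n_0$ trapped between consecutive primes'' argument with the monotonicity of $x \pm \sqrt{x}$ and the same endpoint bookkeeping at $131 = p_{32}$. Your version is marginally more careful in explicitly disposing of the case where $n_0$ is itself prime, which the paper's proof glosses over.
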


\begin{proof}
Let $(n - \sqrt{n},n)$ and $(n,n+\sqrt{n})$ contain primes for all integers $n \geq 131$ and $n = p_k$. Then according proposition \ref{prop_c_1} $p_k - p_{k-1} < \sqrt{p_{k-1}}$ is true for all $p_k \geq 131$. Let $p_k - p_{k-1} < \sqrt{p_{k-1}}$ be true for all $p_k \geq 131$ but proposition \ref{prop_c_1} is false for some integer $n > p_{32} = 131$. Let $n_0$ be such an integer that at least one of the intervals $(n_0 - \sqrt{n_0},n_0)$, $(n_0,n_0 + \sqrt{n_0})$ contains no primes; then there are two cases:
\newline
\newline
Case 1: Let $(n_0 - \sqrt{n_0}, n_0)$ contain no primes. Let $p_{n-1}, p_n$ be such that $p_{32} \leq p_{n-1} < n_0 < p_n$, then the interval $(p_n - \sqrt{p_n}, p_n)$ contains no primes. Indeed, $n_0$ is not prime and the interval $(n_0 - \sqrt{n_0}, p_n) = (n_0 - \sqrt{n_0}, n_0)\cup[n_0]\cup[(n_0, p_n)$ contains no primes. Further we have $(p_n- \sqrt{p_n}, p_n)\subset(n_0 - \sqrt{n_0}, p_n)$ since $n_0 - \sqrt{n_0} < p_n - \sqrt{p_n}$ the interval $(p_n - \sqrt{p_n}, p_n)$ contains no primes.
\newline
\newline
Case 2: Let $(n_0, n_0 + \sqrt{n_0})$ contain no primes. Let $p_{n-1}, p_n$ be such that $p_{32}\leq p_{n-1} < n_0 < p_n$ then the interval $(p_{n-1}, p_{n-1} + \sqrt{p_{n-1}})$ contains no primes. Indeed, $n_0$ is not prime so the interval $(p_{n-1}, n_0 + \sqrt{n_0}) = (p_{n-1}, n_0)\cup[n_0]\cup(n_0, n_0 + \sqrt{n_0})$ contains no primes. Furthermore, $(p_{n-1}, p_{n-1}+ \sqrt{p_{n-1}})\subset(p_{n-1}, n_0 + \sqrt{n_0})$ since $p_{n-1}+ \sqrt{p_{n-1} }<n_0 + \sqrt{n_0}$ so the interval $(p_{n-1}, p_{n-1} + \sqrt{p_{n-1}})$ contains no primes. 
\newline
\newline
Both cases contradict proposition \ref{prop_c_1} since $p_{32} < p_n$ in case 1 and $p_{32} \leq p_{n-1}$ in case 2.
\end{proof}

\begin{proof}[Proof of conjecture \ref{oppermann} (Oppermann)]
Let $p_k - p_{k-1} < \sqrt{p_{k-1}}$ be true for every $p_k \geq 131$ then according to proposition \ref{prop_c_2} the intervals $(m^2 + m, (m+1)^2)$ where $n = (m+1)^2$ and $(m^2, m^2 + m)$ where $n = m^2, m^2 >131$ contain primes. The interval $(m^2, (m+1)^2)$ is a union of $(m^2, m^2 + m), (m^2 + m, (m+1)^2)$. Thus the conjecture is true for all $m^2$ not less than 131; by actual verification we find that it is true for smaller values. 
\end{proof}

\section{Discussion \& Conclusion}
The paper has explicitly shown that the general expected evaluation of the difference between consecutive primes $p_{n+1} - p_n = O(\sqrt{p_n})$ is a sufficient condition to prove Legendre's and Oppermann's conjectures. We have proved Legendre's and Oppermann's conjectures applying as evaluations of the difference between primes $p_{n+1} - p_n < 2\sqrt{p_{n+1}}$ and $p_{n+1} - p_n < \sqrt{p_n}$, respectively.

\bibliography{references}
\bibliographystyle{plain}

\end{document}